\newtheorem{theorem}{Theorem}[section]
\newtheorem{proposition}[theorem]{Proposition}
\newtheorem{lemma}[theorem]{Lemma}
\newtheorem{definition}[theorem]{Definition}
\theoremstyle{remark}
\newtheorem{remark}[theorem]{Remark}
\newcommand{\e}{\varepsilon}		       
\newcommand{\R}{\mathbb{R}}
\newcommand{\T}{\mathbb{T}^3}
\newcommand{\dive}{\mathop{\mathrm {div}}}
\newcommand{\rrho}{\sqrt{\rho}}
\newcommand{\la}{\lambda}
\newcommand{\rn}{\rho_n}
\newcommand{\rrn}{\sqrt{\rho_n}}
\newcommand{\un}{u_{n}}
\newcommand{\wn}{w_{n}}
\newcommand{\bedn}{\beta_{\delta}^{l}(u_{n})}
\newcommand{\bed}{\beta_{\delta}^{l}(u)}
\newcommand{\beln}{\bar{\beta}_{\lambda}(\rho_{n})}
\newcommand{\bel}{\bar{\beta}_{\lambda}(\rho)}
\newcommand{\belnp}{\bar{\beta}^{'}_{\lambda}(\rho_{n})}
\newcommand{\nablay}{\nabla_{y}}
\newcommand{\re}{\rho_{\e}}
\newcommand{\ue}{u_{\e}}
\newcommand{\weakto}{\rightharpoonup}
\newcommand{\weaktos}{\stackrel{*}{\rightharpoonup}}
\numberwithin{equation}{section}
\subjclass[2010]{Primary: 35Q35, Secondary: 35D05, 76N10.}
\keywords{Compressible Fluids, Navier-Stokes-Korteweg, Capillarity, Vacuum, Compactness.}
\begin{document}

\title[Navier-Stokes-Korteweg Equations]{On the compactness of weak solutions to the Navier-Stokes-Korteweg equations for capillary fluids}

\author[P. Antonelli]{Paolo Antonelli}
\address[P. Antonelli]{\newline
GSSI - Gran Sasso Science Institute \\ Viale Francesco Crispi 7\\67100, L'Aquila \\Italy}
\email[]{\href{paolo.antonelli@gssi.it}{paolo.antonelli@gssi.it}}

\author[S. Spirito]{Stefano Spirito}
\address[S. Spirito]{\newline
DISIM- Dipartimento di Ingegneria e Science dell'Informazione e Matematica \\ Via Vetoio\\67100, L'Aquila \\Italy}
\email[]{\href{stefano.spirito@univaq.it}{stefano.spirito@univaq.it}}

\begin{abstract}
In this paper we consider the Navier-Stokes-Korteweg equations for a viscous compressible fluid with capillarity effects in three space dimensions. We prove compactness of finite energy weak solutions for large initial data. In contrast with previous results regarding this system, vacuum regions are allowed in the definition of weak solutions and no additional damping terms are considered. The compactness is obtained by introducing suitable truncations of the velocity field and the mass density at different scales and use only the {\em a priori} bounds obtained by the energy and the BD entropy. 
\end{abstract}

\maketitle

\section{Introduction}\label{sec:intro}
This paper is concerned about the following Navier-Stokes-Korteweg system
\begin{align}
&\partial_t\rho+\dive(\rho u)=0,\,\rho\geq 0,\label{eq:qns1}\\
&\partial_t(\rho u)+\dive(\rho u\otimes u)+\nabla\rho^{\gamma}-2\nu\dive(\rho Du)-2\kappa^2\rho\nabla\Delta\rho=0,\label{eq:qns2}
\end{align}
in a three dimensional periodic domain, so that $(t, x)\in(0, T)\times\mathbb T^3$. We endow system \eqref{eq:qns1}-\eqref{eq:qns2} with initial data
\begin{equation}\label{eq:id}
\begin{aligned}
\rho(0,x)&=\rho^0(x),\\
(\rho u)(0,x)&=\rho^0(x)u^0(x).
\end{aligned}
\end{equation} 
 The positive scalar function $\rho$ represents the density of the fluid and the three dimensional vector field $u$ is the velocity. The positive constants $\nu$ and $\kappa$, respectively, are the viscosity and the capillarity constants.\par
 The aim of this paper is to prove the compactness of solutions to \eqref{eq:qns1}-\eqref{eq:qns2}. More precisely, given a sequence of solutions to \eqref{eq:qns1}-\eqref{eq:qns2}, we show there exists a subsequence converging to a weak solution of the same system. This is one of the key steps in studying the existence of solutions for fluid dynamical systems like \eqref{eq:qns1}-\eqref{eq:qns2}, the other one being the construction of a suitable sequence of approximate solutions.

The system \eqref{eq:qns1}-\eqref{eq:qns2} falls in the class of Navier-Stokes-Korteweg equations, which in their general form read
\begin{equation}\label{eq:nskgeneral}
\begin{aligned}
&\partial_t\rho+\dive(\rho u)=0\\
&\partial_t(\rho u)+\dive(\rho u\otimes u)+\nabla p=2\nu\dive\mathbb S+2\kappa^2\dive\mathbb K,
\end{aligned}
\end{equation}
where $\mathbb S$ is the viscosity stress tensor given by
\begin{equation}\label{eq:visc}
\mathbb S=h(\rho)\,D u+g(\rho)\dive u\mathbb I,
\end{equation}
the coefficients $h$ and $g$ satisfying
\begin{equation*}
\begin{aligned}
&h\ge 0,\quad&h+3g\geq0,
\end{aligned}
\end{equation*}
 and the capillarity term $\mathbb K$ satisfies
\begin{equation}\label{eq:cap}
\dive\mathbb K=\nabla\left(\rho\dive(k(\rho)\nabla\rho)-\frac12(\rho k'(\rho)-k(\rho))|\nabla\rho|^2\right)-\dive(k(\rho)\nabla\rho\otimes\nabla\rho).
\end{equation}
The system \eqref{eq:qns1}-\eqref{eq:qns2} is then obtained from \eqref{eq:nskgeneral}-\eqref{eq:cap} by choosing  $k(\rho)=1$, $h(\rho)=\rho$ and $g(\rho)=0$. 

Systems of Korteweg type arise in modeling several physical phenomena, {\em e.g.} capillarity phenomena in fluids with diffuse interface, where the density experiences steep but still smooth change of value. $\mathbb{K}$ is called the Korteweg tensor and is derived rigorously from thermodynamic considerations by Dunn and Serrin in \cite{DS}.\par
Local existence of smooth solutions and global existence with small data for the system \eqref{eq:qns1}-\eqref{eq:qns2} have been proved in \cite{HL, HL1}. Regarding the theory of weak solutions few results are available. By exploiting some novel {\em a priori} estimates yielded by the so-called Bresch-Desjardins (BD) entropy, \cite{BD}, 
in \cite{BDL} the authors prove the global existence of weak solutions for the system \eqref{eq:qns1}-\eqref{eq:qns2}, 
by considering test functions of the type $\rho\phi$, with $\phi$ smooth and compactly supported. This particular notion of weak solutions has the advantage to avoid some mathematical difficulties which arise in the definition of the velocity field in the vacuum region.
The result was later extended in \cite{J} to the case of Quantum-Navier-Stokes, namely when we choose $k(\rho)=1/\rho$ in \eqref{eq:cap}. 
When system \eqref{eq:qns1}-\eqref{eq:qns2} is augmented by a damping term in the equation for the momentum density, then it is possible to prove the existence of global solutions by using the standard notion of weak solutions \cite{BD}. Indeed the presence of the damping term allows to define the velocity field everywhere in the domain.

However when dealing with general finite energy weak solutions to \eqref{eq:qns1}-\eqref{eq:qns2}, a major mathematical difficulty arises in defining the velocity field in the vacuum region, due to the degeneracy of the viscosity coefficient $h(\rho)=\rho$. The momentum density is always well defined, but unfortunately the standard a priori estimates given by the physical energy (and by the BD entropy) do not avoid a possible concentration which would prevent the convergence of the convective term in the compactness argument. Furthermore due to the presence of the capillarity term, a Mellet-Vasseur type estimate \cite{MV} does not seem to be available for the system \eqref{eq:qns1}-\eqref{eq:qns2}.
This problem was overcome for the quantum case when the viscosity coefficients are chosen to be $h(\rho)=\rho$ and $g(\rho)=0$. In \cite{AS}, by defining a suitable velocity it is possible to consider an alternative formulation of the system where the third order term vanishes, thus allowing the derivation of a Mellet and Vasseur type estimate for the new velocity. Alternatively, in \cite{LV} the authors replace the Mellet-Vasseur argument a by truncation method, so that they can recover the necessary compactness. In  both the results in \cite{AS} and \cite{LV} it is crucial that the viscosity and capillarity coefficients satisfy 
 \begin{equation}\label{eq:relation}
 k(\rho)=\frac{h'(\rho)^2}{\rho}. 
 \end{equation}
Note that this relation \eqref{eq:relation} plays a crucial role in the theory, see for example \cite{BGL} where the authors study the vanishing viscosity limit for the quantum Navier-Stokes equations, or \cite{AS1} where \eqref{eq:relation} is extensively exploited to construct the approximating system and \cite{BCNV} where numerical methods are performed. In particular, in all the existing literature, the relation \eqref{eq:relation} has been proved to be fundamental in order to show compactness of solutions. 
We stress that in  \eqref{eq:qns1}-\eqref{eq:qns2} the viscosity and capillarity coefficients do not satisfy the relation \eqref{eq:relation} and therefore those previous arguments cannot be applied to system under our consideration. 

In this paper we overcome this difficulty. In order to prove our compactness result, we also exploit a truncation argument. Contrarily to \cite{LV}, here it is not sufficient to truncate only the velocity field because of the lack of control on the third order term. To overcome this issue we also perform an additional truncation of the density. 
Unfortunately, this approach is not as straightforward as it would appear at a first glance. Indeed when truncating for example the convective term, some remainders cannot be simply controlled from the a priori estimates. Thus we need to introduce several scales of truncations, in order to control all the error terms.

As already remarked, inferring compactness properties for solutions to fluid dynamical systems like \eqref{eq:qns1}-\eqref{eq:qns2} are only the first step towards an existence result for global in time finite energy weak solutions. Usually this is combined with the construction of a suitable sequence of smooth approximate solutions. Potentially, this latter step could be achieved by considering the following approximating system
\begin{equation*}
\begin{aligned}
&\partial_t\re+\dive(\re\ue)=0,\\
&\partial_{t}(\re\ue)+\dive(\re\ue\otimes\ue)-2\nu\dive(\re\,D\ue)+\nabla\re^{\gamma}+\e\re|\ue|^2\ue+\e\ue\\&=2\kappa^2\re\nabla\Delta\re+2\e\re\nabla\left(\frac{\Delta\sqrt{\re}}{\sqrt{\re}}\right).
\end{aligned}
\end{equation*}
and by adapting, probably in a non trivial way, the regularisation procedure in \cite{LV} in order to rigorously derive the truncated formulation of the momentum equations. On the other hand, providing a smooth approximating system as in \cite{AS1} seems to be very challenging due to the the very rigid structure of the approximation procedure. We plan to attack this problem in future works.\par

  We conclude this introduction by describing the state of art of the analysis of the Cauchy problem  for the general system \eqref{eq:nskgeneral}-\eqref{eq:cap}.  In the case  $\kappa=0$ \eqref{eq:nskgeneral} reduces to the system of compressible Navier-Stokes equations. When the viscosity coefficient $h(\rho)$ is chosen degenerating on the vacuum region $\{\rho=0\}$ the Lions-Feireisl theory, \cite{L}, \cite{F}, and the recent approach in \cite{BJ} cannot be used because it is not possible to define the velocity in the vacuum regions. 
  The global existence of weak solutions has been proved independently in \cite{VY1} and \cite{LX} in the case $h(\rho)=\rho$ and $g(\rho)=0$. In both cases, non trivial approximation procedures are required to prove the BD entropy and the Mellet and Vasseur inequality.\par
    When the viscosity $\nu=0$, the system \eqref{eq:nskgeneral} is called Euler-Korteweg and it has been also extensively studied. In \cite{BGDD} local well-posedness has been proved, while in \cite{AH} the global existence of smooth solutions with small data has been proved. Moreover, when $k(\rho)=1/\rho$ the system \eqref{eq:nskgeneral} is called Quantum Hydrodynamic system (QHD) and arises for example in the description of quantum fluids. The global existence of finite energy weak solutions for the QHD system has been proved in \cite{AM, AM2} without restrictions on the regularity or the size of the initial data. Non uniqueness results by using convex integration methods has been proved in \cite{DFM}.\par
Moreover, relative entropy methods to study singular limits for the equations \eqref{eq:nskgeneral}-\eqref{eq:cap} have been exploited in \cite{BGL, DFM, DM1,GLT}, in particular we mention the incompressible limit in \cite{AHM} in the quantum case, the quasineutral limit \cite{DM} for the constant capillarity case and the vanishing viscosity limit in \cite{BGL}. 
  Finally, the analysis of the long time behaviour for the isothermal Quantum-Navier-Stokes equations has been performed in \cite{CCH}.

 \subsection*{Organization of the paper.}
The paper is organized as follows. In Section \ref{sec:pre} we fix the notations and give the precise definition of weak solutions of \eqref{eq:qns1}-\eqref{eq:qns2}. In Section \ref{sec:apriori} we recall the formal a priori estimates for solutions of the system \eqref{eq:qns1}-\eqref{eq:qns2}, namely the energy estimate and the BD entropy. Finally, in the Section \ref{sec:main} we prove Theorem \ref{teo:main}.

\section{Preliminaries} \label{sec:pre}
\subsection{Notations}
Given $\Omega\subset\R^3$, the space of compactly supported smooth functions with value in $\R^d$ will be $\mathcal{D}((0,T)\times\Omega;\R^{d})$. We will denote with $L^{p}(\Omega)$ the standard Lebesgue spaces and with $\|\cdot\|_{L^p}$ their norm. The Sobolev space of $L^{p}$ functions with $k$ distributional derivatives in $L^{p}$ is $W^{k,p}(\Omega)$ and in the case $p=2$ we will write $H^{k}(\Omega)$. The spaces $W^{-k,p}(\Omega)$ and $H^{-k}(\Omega)$ denote the dual spaces of $W^{k,p'}(\Omega)$ and $H^{k}(\Omega)$ where  $p'$ is the H\"older conjugate of $p$. Given a Banach space $X$ we use the the classical Bochner space for time dependent functions with value in $X$, namely $L^{p}(0,T;X)$, $W^{k,p}(0,T;X)$ and $W^{-k,p}(0,T;X)$ and when $X=L^p(\Omega)$, the norm of the space $L^{q}(0,T;L^{p}(\Omega))$ is denoted by $\|\cdot\|_{L^{q}_{t}(L^{p}_{x})}$. We denote by $Du=(\nabla u+(\nabla u)^T)/2$ the symmetric part of the gradient and by $Au=(\nabla u-(\nabla u)^T)/2$ the antisymmetric one. Finally, given a matrix  $M\in\R^{3\times 3}$ we denote by $\mathrm{Symm}\,M$, the symmetric part of $M$ and by $\mathrm{Asymm}\,M$ the antisymmetric one. 
\subsection{Definition of weak solutions and statement of the main result}
The definition of weak solution for the system \eqref{eq:qns1}-\eqref{eq:qns2} is the following
 \begin{definition}\label{def:ws}
A pair $(\rho, u )$ with $\rho\geq0$ is said to be a weak solution of the Cauchy problem \eqref{eq:qns1}-\eqref{eq:qns2}-\eqref{eq:id} if the following conditions are satisfied. 
\begin{enumerate}
\item Integrability conditions.
\begin{align*}
&\rho\in L^{\infty}(0,T;H^{1}(\T))\cap L^{2}(0,T;H^2(\T)),
&\rrho\,u\in L^{\infty}(0,T;L^{2}(\T)),\\
&\rho^{\frac{\gamma}{2}}\in L^{\infty}(0,T;L^2(\T))\cap L^{2}(0,T;H^1(\T)),
&\nabla\rrho\in L^{\infty}(0,T;L^{2}(\T)).
\end{align*}
\item Equations.\\
For any $\phi\in C_c^{\infty}([0,T);C^{\infty}(\T);\R)$.
\begin{equation*}
\int\rho^0\phi(0)\,dx+\iint\rho\phi_t+\rrho\rrho u\nabla\phi\,dxdt=0.
\end{equation*}
For any fixed $l=1,2,3$ and $\psi\in C_c^{\infty}([0,T);C^{\infty}(\T);\R)$
\begin{equation*}
\begin{aligned}
&\int \rho^0u^{0,l}\psi(0)\,dx+\iint\rrho(\rrho u^{l})\psi_t\,dxdt+\iint\rrho u\rrho u^{l}:\nabla\psi\,dxdt\\
&+\nu\iint\rrho\rrho u^{l}\Delta\psi\,dxdt+\nu\iint\rrho\rrho u\nabla\nabla_{l}\psi\,dxdt+2\nu\iint\nabla_l\rrho \rrho\,u\nabla\psi\,dxdt\\
&2\nu\iint\rrho u^{l}\nabla\rrho\nabla\psi\,dxdt
-2\iint\nabla\rho^{\frac{\gamma}{2}}\rho^{\frac{\gamma}{2}}\cdot\psi\,dxdt-2\kappa^2\iint\nabla_{l}\rho\Delta\rho\psi\,dxdt\\
&-\iint2\kappa^2\rho\Delta\rho\nabla_{l}\psi\,dxdt=0.
\end{aligned}
\end{equation*}
 \item Energy Inequality.\\
There exist $\mathcal{S}\in L^{2}((0,T)\times\T)$ such that $\rrho\mathcal{S}=\mathrm{Symm}(\nabla(\rho u))-2\nabla\rrho\otimes\rrho u)\textrm{ in }\mathcal{D}'$ and $\Lambda$ such that $\rho\,u=\rrho\Lambda$ satisfying the following energy inequality
\begin{equation*}
\begin{aligned}
&\sup_{t\in(0,T)}\int_{\T}\frac{|\Lambda(t,x)|^2}{2}+\frac{\rho(t,x)^{\gamma}}{\gamma-1}+\kappa^2|\nabla\rho(t,x)|^2\,dx+\iint|\mathcal{S}(s,x)|^2\,dxds\\
&\leq\int_{\T}\rho^0(x)|u^0(x)|^2+\frac{{\rho^{0}(x)}^{\gamma}}{\gamma-1}+\kappa^2|\nabla\rho^0(x)|^2\,dx.
\end{aligned}
\end{equation*}
\item BD Entropy.\\
There exists $\mathcal{A}\in L^{2}((0,T)\times\T)$ such that $\rrho\mathcal{A}=\mathrm{Asymm}(\nabla(\rho u))-2\nabla\rrho\otimes\rrho u)\textrm{ in }\mathcal{D}'$  such that
\begin{equation*}
\begin{aligned}
&\sup_{t\in(0,T)}\int_{\T}\frac{|\Lambda(t,x)+2\nu\nabla\rrho(t,x)|^2}{2}+\frac{\rho(t,x)^{\gamma}}{\gamma-1}+\kappa^2|\nabla\rho(t,x)|^2\,dx\\
&+\iint|\mathcal{A}(s,x)|^2\,dxds+\frac{8\nu}{\gamma}\iint|\nabla\rho^{\frac{\gamma}{2}}(s,x)|^2\,dxds+4\kappa^2\nu\iint|\Delta \rho(s,x)|^2\,dxds\\
&\leq\int_{\T}\frac{|\sqrt{\rho^0(x)}u^0(x)+2\nu\nabla\sqrt{\rho^0(x)}|^2}{2}+\frac{{\rho^{0}(x)}^{\gamma}}{\gamma-1}+\kappa^2|\nabla\rho^0(x)|^2\,dx.
\end{aligned}
\end{equation*}

\end{enumerate}
\end{definition}
\begin{remark}
Let us notice that in the case of smooth solutions we have $\sqrt{\rho}\mathcal S=\rho Du$ and 
$\sqrt{\rho}\mathcal A=\rho Au$. 
However, at present it is not clear whether, for arbitrary finite energy weak solutions, it is possible to write $\mathcal S=\sqrt{\rho}Du$, $\mathcal A=\sqrt{\rho}Au$. Indeed, from the natural bounds given in Propositions \ref{lem:s1} and \ref{lem:s3bis} below, it is straightforward to obtain that the sequences $\sqrt{\rho_n}Du_n$ and $\sqrt{\rho_n}Au_n$ are uniformly bounded in $L^2$, but further informations would be needed in order to state the convergence towards $\sqrt{\rho}Du$, $\sqrt{\rho}Au$. This important remark was already noticed in \cite{LLX} and \cite{GJX} for the Navier-Stokes equations with degenerate viscosity, then later it was also exploited in \cite{LV} for the quantum Navier-Stokes equations.
\end{remark}
In order to state our main result, we first specify the assumptions on the initial data. We consider  $\{\rho^0_n\}_{n}$ being a sequence of smooth and strictly positive functions and $\rho^0$ be a strictly positive function such that 
\begin{equation}\label{eq:hyidr}
\begin{aligned}
&\rho_n^0>0,\quad \rho_n^0\rightarrow \rho^0\textrm{ strongly in }L^{1}(\T),\\
&\{\rho_n^0\}_{n}\textrm{ is uniformly in bounded in } L^{\gamma}(\T),\\
&\{\nabla\sqrt{\rho_n^0}\}_n\textrm{ is uniformly bounded in }L^{2}(\T),\\
\end{aligned}
\end{equation}
Regarding the initial velocity, let $\{u_0^{n}\}$ be a sequence of smooth vector fields and $u^0$ be a smooth vector field such that 
\begin{equation}\label{eq:hyidu}
\begin{aligned}
&\{\sqrt{\rho_n^0}u_{n}^0\}\textrm{ is uniformly bounded in }L^{2}(\T),\\
&\rho_n^0 u_n^0\rightarrow \rho^0 u^0\textrm{ in }L^{p}(\R^3)\textrm{ with }p<2.\\ 
\end{aligned}
\end{equation}

The main theorem of this paper is the following. 
\begin{theorem}\label{teo:main}
Assume $\{\rn^0\}_n$ and $\{\rn^0\un^0\}_n$ are sequences of initial data for \eqref{eq:qns1}-\eqref{eq:qns2} satisfying \eqref{eq:hyidr} and \eqref{eq:hyidu}.
Let $\{(\rho_n, \un)\}_n$ with $\rn>0$ be a sequence of smooth solutions of \eqref{eq:qns1}-\eqref{eq:qns2} with initial data $\{\rn^0\}_n$ and $\{\rn^0\un^0\}_n$, then, up to subsequences not relabelled, there exist $(\rho, u)$  such that 
\begin{equation}\label{eq:main}
\begin{aligned}
&\rn\rightarrow\rho\textrm{ strongly in }L^{2}((0,T);H^1({\T})),\\
&\rn\un\rightarrow \rho\,u\textrm{ strongly in  }L^{p}((0,T)\times\T)\textrm{ for any }p<2,\\
\end{aligned}
\end{equation} 
and $(\rho, u)$ is a weak solutions of \eqref{eq:qns1}-\eqref{eq:qns2}-\eqref{eq:id} in the sense of Definition \ref{def:ws}.
\end{theorem}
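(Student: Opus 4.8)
The plan is to combine the uniform \emph{a priori} bounds coming from the energy inequality and the BD entropy with a double truncation argument, on both the velocity and the density, in the spirit of \cite{LV} but adapted to our setting, where the structural relation \eqref{eq:relation} fails. \textbf{Step 1 (Uniform bounds and weak limits).} First I would read off from the energy inequality that $\sqrt{\rho_n}\,u_n$ (that is $\Lambda_n$) is bounded in $L^{\infty}_t(L^2_x)$, that $\rho_n^{\gamma/2}$ is bounded in $L^{\infty}_t(L^2_x)$, that $\nabla\sqrt{\rho_n}$ is bounded in $L^{\infty}_t(L^2_x)$, and that $\mathcal{S}_n$ is bounded in $L^2((0,T)\times\T)$; and from the BD entropy that $\nabla\rho_n^{\gamma/2}$ and $\Delta\rho_n$ are bounded in $L^2((0,T)\times\T)$ together with $\mathcal{A}_n$. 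In particular $\rho_n$ is uniformly bounded in $L^{\infty}_t(H^1_x)\cap L^2_t(H^2_x)$. After passing to subsequences, all these quantities admit weak, respectively weak-$*$, limits, which will eventually furnish the integrability conditions of Definition \ref{def:ws}.

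\textbf{Step 2 (Strong convergence of the density).} Since $\rho_n u_n=\sqrt{\rho_n}\cdot\sqrt{\rho_n}\,u_n$ is bounded in $L^{\infty}_t(L^{3/2}_x)$ (using $\sqrt{\rho_n}\in L^{\infty}_t(L^6_x)$ via the Sobolev embedding of $H^1$), the continuity equation \eqref{eq:qns1} gives a uniform bound on $\partial_t\rho_n$ in a negative Sobolev space. An Aubin--Lions--Simon argument, exploiting the $L^2_t(H^2_x)$ bound, then yields $\rho_n\to\rho$ strongly in $L^2((0,T);H^1(\T))$, which is the first assertion of \eqref{eq:main}; this upgrades to strong convergence of $\sqrt{\rho_n}$ and of $\nabla\sqrt{\rho_n}$ and to convergence a.e.

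\textbf{Step 3 (Limit velocity, truncation argument, and the main obstacle).} With $\sqrt{\rho_n}\to\sqrt{\rho}$ strongly and $\sqrt{\rho_n}\,u_n\weakto\Lambda$ weakly, the momentum converges to $\sqrt{\rho}\,\Lambda=:\rho u$, which defines $u$ on $\{\rho>0\}$. The real difficulty is the convective term $\rho_n u_n\otimes u_n=\Lambda_n\otimes\Lambda_n$: weak convergence of $\Lambda_n$ does not suffice and the available bounds do not exclude concentration, while the third-order capillary term obstructs a Mellet--Vasseur estimate. To bypass this I would introduce a velocity truncation $\bedn$ at scale $\delta$ together with a density truncation $\beln$ at scale $\lambda$. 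For fixed $\delta,\lambda$ the truncated quantities are bounded with controlled time derivatives, hence strongly compact by Aubin--Lions, and the differences $\sqrt{\rho_n}\,(u_n-\bedn)$ can be made uniformly small as $\delta\to0$ using the $L^2$ bounds on $\mathcal{S}_n$ and $\mathcal{A}_n$. The hard part will be the control of the remainder terms generated when inserting these truncations into the momentum equation, and specifically into the convective and capillary contributions: since \eqref{eq:relation} fails, these remainders are not directly dominated by the energy and BD quantities, which is exactly why several scales of truncation are needed, with $\lambda$ and $\delta$ sent to zero in a prescribed order so that each error term is absorbed by an \emph{a priori} bound before the next limit is taken.

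\textbf{Step 4 (Passage to the limit).} Once strong convergence of $\sqrt{\rho_n}\,u_n$ is secured, I would pass to the limit term by term in the weak formulation: the continuity equation follows from the strong convergence of $\rho_n$ and of $\rho_n u_n$; the viscous, pressure and capillary terms follow from the weak limits of $\mathcal{S}_n$, $\nabla\rho_n^{\gamma/2}$ and $\Delta\rho_n$ combined with the strong convergence of $\rho_n$ and $\sqrt{\rho_n}$; and the convective term from the strong convergence just obtained, giving the second line of \eqref{eq:main}. Finally the energy and BD inequalities for $(\rho,u)$ are recovered by weak lower semicontinuity of the relevant convex functionals, after identifying the limits $\mathcal{S}$, $\mathcal{A}$, $\Lambda$ in $\mathcal{D}'$, thereby verifying that $(\rho,u)$ is a weak solution in the sense of Definition \ref{def:ws}.
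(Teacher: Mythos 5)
Your Steps 1 and 2 match the paper. The genuine gap is in Steps 3--4, where you claim that $\sqrt{\rho_n}\,(u_n-\beta_\delta^l(u_n))$ can be made uniformly small as $\delta\to0$ ``using the $L^2$ bounds on $\mathcal S_n$ and $\mathcal A_n$'', and then proceed ``once strong convergence of $\sqrt{\rho_n}\,u_n$ is secured''. Since $|u_n-\beta^l_\delta(u_n)|\lesssim |u_n|\mathbbm{1}_{\{|u_n|\ge 1/\delta\}}$, uniform smallness of that difference in $L^2$ is equivalent to equi-integrability of $\rho_n|u_n|^2$, i.e.\ to a Mellet--Vasseur type bound such as $\sup_n\iint\rho_n|u_n|^2\ln(1+|u_n|^2)\,dx\,dt<\infty$. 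The $L^2$ bounds on $\sqrt{\rho_n}\,Du_n$ and $\sqrt{\rho_n}\,Au_n$ control density-weighted gradients, not concentrations of the kinetic energy, and the paper states explicitly that a Mellet--Vasseur estimate is not available here because of the capillary term; indeed the remarks after Theorem \ref{teo:main} record that one cannot even prove $\sqrt{\rho_n}\,u_n\weakto\sqrt{\rho}\,u$ weakly in $L^2$, only $\sqrt{\rho}\,\Lambda=\rho u$, and that \eqref{eq:main} does \emph{not} imply convergence of the convective term. So the strong convergence your Step 4 is premised on is precisely what cannot be obtained; if it could, one would prove a stronger notion of solution than the one in Definition \ref{def:ws}.

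The paper's actual route never de-truncates at the level of the sequence. It multiplies the momentum equation by $\nabla_y\beta^l_\delta(u_n)$ and tests against $\bar\beta_\lambda(\rho_n)\psi$, obtaining a truncated momentum identity plus a remainder $R_n^{\delta,\lambda}$ whose $L^1$ norm is $O(\delta/\sqrt{\lambda}+\lambda/\delta+\lambda+\delta)$ uniformly in $n$; the truncated nonlinear terms such as $\rho_n u_n\beta^l_\delta(u_n)\bar\beta_\lambda(\rho_n)$ converge strongly simply because the truncations are bounded and $\rho_n$, $\rho_n u_n$ converge a.e.\ (Lemma \ref{lem:c2}). One first sends $n\to\infty$ at fixed $(\delta,\lambda)$, and only then removes the truncations in the \emph{limit} equation by dominated convergence along $\delta=\lambda^\alpha$ with $\alpha\in(1/2,1)$ --- this coupling being forced by the competing errors $\delta/\sqrt{\lambda}$ (from the capillary term) and $\lambda/\delta$ (from the density truncation), a mechanism your sketch does not capture. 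A final, separate truncated integration-by-parts identity with $\hat\beta_\delta(u_n)$ is needed to identify $\mathcal S$ and $\mathcal A$, which you gloss over. To salvage your outline, replace the ``uniform smallness of the truncation error'' step by this order of limits.
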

\begin{remark}
We stress that the velocity field $u$ is not uniquely defined on the vacuum region $\{\rho=0\}$. 
\end{remark}
\begin{remark}
We stress that \eqref{eq:main} does not imply the convergence of the convective term, which on the other hand comes from the truncation arguments. 
\end{remark} 
\begin{remark}
The notion of weak solution in Definition \ref{def:ws} is weaker compared with the one in the quantum case in \cite{AS, AS1}.  Indeed, in \cite{AS,AS1} it can be proved that $\Lambda=\rrho\,u$ because
$$
\rrn\,\un\to\rrho\,u\textrm{ strongly in }L^{2}((0,T)\times\T). $$
As a consequence the energy inequality and the entire weak formulation can be written only in terms of $\rho$, $\Lambda$, $\mathcal{S}$ and $\mathcal{A}$. On the contrary, in the proof of Theorem \ref{teo:main}, we are not able to prove that $\Lambda=\rrho\,u$, but only that $\rrho\Lambda=\rho\,u$. Indeed, it is not clear whether 
$$
\rrn\,\un\weakto\rrho\,u\textrm{ weakly in }L^{2}((0,T)\times\T),$$
since we do not know that $\Lambda=0$ on $\{\rho=0\}$. 
\end{remark}

\section{A priori estimates}\label{sec:apriori}
In this section we recall the two formal {\em a priori} estimates available for solutions of \eqref{eq:qns1}-\eqref{eq:qns2}. The first lemma is the basic energy estimate for the system \eqref{eq:qns1}-\eqref{eq:qns2}. 
\begin{proposition}\label{lem:s1}
Let $(\rn,\un)$ be a smooth solution of \eqref{eq:qns1}-\eqref{eq:qns2}, then 
\begin{equation}\label{eq:en}
\begin{aligned}
\sup_{t\in (0,T)}&\left(\int\rn\frac{|\un|^2}{2}+\frac{\rn^{\gamma}}{\gamma-1}+{\kappa}^2|\nabla\rn|^2\,dx\right)+2\nu\iint\rn|D\un|^2\,dxdt\\
&=\int\rn^0\frac{|\un^0|^2}{2}+\frac{{\rn^0}^{\gamma}}{\gamma-1}+{\kappa}^2|\nabla\rn^0|^2\,dx.
\end{aligned}
\end{equation}
\end{proposition}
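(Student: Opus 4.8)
The plan is to derive the energy identity by testing the momentum equation \eqref{eq:qns2} with the velocity $\un$ itself, integrating over $\T$, and combining with the continuity equation \eqref{eq:qns1} to recognize each term as a total time derivative. Since the solutions are assumed smooth and $\rn > 0$, all manipulations are justified classically and no regularization is needed; the content is purely to identify the correct combination of terms.

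First I would multiply \eqref{eq:qns2} by $\un$ and integrate in space. The inertial part $\partial_t(\rn\un)\cdot\un + \dive(\rn\un\otimes\un)\cdot\un$ combines, after using \eqref{eq:qns1}, into $\frac{d}{dt}\int \rn\frac{|\un|^2}{2}\,dx$; the standard way to see this is to write the convective terms using the continuity equation so that the cross terms telescope and the boundary contributions vanish by periodicity on $\T$. Next, the pressure term $\nabla\rn^{\gamma}\cdot\un$ integrates by parts to $-\int\rn^{\gamma}\dive\un\,dx$, and using \eqref{eq:qns1} in the form $\partial_t\rn = -\dive(\rn\un)$ together with the chain rule one identifies this with $\frac{d}{dt}\int\frac{\rn^{\gamma}}{\gamma-1}\,dx$. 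For the viscous term, integrating $-2\nu\dive(\rn D\un)\cdot\un$ by parts gives $2\nu\int\rn D\un:\nabla\un\,dx$, and since $D\un$ is symmetric this equals $2\nu\int\rn|D\un|^2\,dx$, the dissipation term.

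The one genuinely structural step is the capillary term $-2\kappa^2\rn\nabla\Delta\rn\cdot\un$. Integrating by parts and invoking \eqref{eq:qns1} to replace $\rn\dive\un$ and $\un\cdot\nabla\rn$ by time derivatives of $\rn$, one should rewrite the capillary contribution as an exact time derivative of $\kappa^2\int|\nabla\rn|^2\,dx$; concretely, the identity $\frac{d}{dt}\int\frac{\kappa^2}{2}|\nabla\rn|^2\,dx = \kappa^2\int\nabla\rn\cdot\nabla\partial_t\rn\,dx$ combined with $\partial_t\rn=-\dive(\rn\un)$ and a careful integration by parts matches the capillary term exactly. This algebraic cancellation is the main obstacle: one must track several integrations by parts and use the continuity equation at the right moment so that the third-order term reduces cleanly to the derivative of the Dirichlet energy of $\nabla\rn$, with no leftover remainders. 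Once all four terms are identified, integrating in time from $0$ to $t$ and taking the supremum yields \eqref{eq:en}.
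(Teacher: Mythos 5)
Your derivation is correct and is exactly the standard argument the paper intends: the paper states Proposition~\ref{lem:s1} without proof (it only proves the BD entropy, Proposition~\ref{lem:s3bis}, by the analogous procedure of testing with the effective velocity $\wn$). Your treatment of each term checks out, including the capillary term, where one integration by parts, the substitution $\partial_t\rn=-\dive(\rn\un)$, and a second integration by parts give $-2\kappa^2\int\rn\nabla\Delta\rn\cdot\un\,dx=\frac{d}{dt}\,\kappa^2\int|\nabla\rn|^2\,dx$ with no remainder.
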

The second main {\em a priori} estimates is the so-called BD entropy. Although this estimate is well-known, see \cite{BD}, we give a sketch of the proof for completeness. 
\begin{proposition}\label{lem:s3bis}
Let $(\rn,\un)$ be a smooth solution of \eqref{eq:qns1}-\eqref{eq:qns2}. Then, $\wn=\un+2\nu\nabla\rn$ and $\rn$ satisfy
\begin{equation}\label{eq:bd1}
\begin{aligned}
\sup_{t\in(0,T)}&\left(\int\rn\frac{|\wn|^2}{2}+\frac{\rn^\gamma}{\gamma-1}+\kappa^2|\nabla\rn|^2\,dx\right)
+\frac{8\nu}{\gamma}\iint|\nabla\rn^{\frac{\gamma}{2}}|^2\,dxdt
+2\nu\iint\rn| A \un|^2\,dxdt\\
&+4\kappa^2\nu\iint|\Delta \rn|^2\,dxdt=\int\rn^0\frac{|\wn^0|^2}{2}+\frac{{\rn^0}^{\gamma}}{\gamma-1}+{\kappa}^2|\nabla\rn^0|^2\,dx.
\end{aligned}
\end{equation}
\end{proposition}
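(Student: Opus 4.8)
The plan is to reproduce the Bresch--Desjardins computation of \cite{BD}, adapting it to the presence of the capillarity term. Since $(\rn,\un)$ is a smooth, strictly positive solution, all the integrations by parts below are legitimate and the resulting relation is an identity rather than an inequality; the only real content is the algebraic reorganization of the viscous and capillary contributions. The starting point is to combine the momentum equation \eqref{eq:qns2} with a transport equation for the drift $\nabla\log\rn$, so as to produce the evolution of the effective velocity $\wn$ appearing in the statement. Concretely, I would first divide the continuity equation \eqref{eq:qns1} by $\rn$ and take the gradient to obtain the identity for $\partial_t\nabla\log\rn$; multiplying it by a suitable multiple of $\rn$ and adding the outcome to \eqref{eq:qns2} yields an equation for $\rn\wn$ whose leading term is the viscosity acting on $\wn$.

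The core of the argument is the classical BD viscous identity, which rewrites $2\nu\dive(\rn D\un)$ together with the gradient term coming from differentiating $\nabla\rn$ in time, so that, after testing the $\rn\wn$--equation against $\wn$ and integrating in space, the symmetric dissipation $2\nu\int\rn|D\un|^2$ of the energy balance \eqref{eq:en} is converted into the antisymmetric dissipation $2\nu\int\rn|A\un|^2$. Simultaneously, the pressure $\nabla\rn^{\gamma}$ paired with the drift $\nabla\log\rn$ gives, after one integration by parts, the sign--definite quantity $\gamma\int\rn^{\gamma-2}|\nabla\rn|^2=\tfrac{4}{\gamma}\int|\nabla\rn^{\gamma/2}|^2$, which after carrying the drift constant is exactly the new dissipation $\frac{8\nu}{\gamma}\int|\nabla\rn^{\frac{\gamma}{2}}|^2$. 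The several transport--type cross terms produced by $\dive(\rn\un\otimes\un)$ and by the convective part of the drift equation must be checked to cancel; this is routine but is where one must be careful with signs.

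The genuinely new step compared with the compressible Navier--Stokes BD entropy is the treatment of the third order capillary term. Pairing $2\kappa^2\rn\nabla\Delta\rn$ with the drift and using $\rn\nabla\log\rn=\nabla\rn$, an integration by parts gives
\begin{equation*}
\int 2\kappa^2\rn\nabla\Delta\rn\cdot\nabla\log\rn\,dx=\int 2\kappa^2\nabla\Delta\rn\cdot\nabla\rn\,dx=-\int 2\kappa^2|\Delta\rn|^2\,dx,
\end{equation*}
so that, carrying the constant from the drift, one recovers precisely the dissipative term $4\kappa^2\nu\int|\Delta\rn|^2\,dx$; the same capillary term, paired instead with the $\un$--part of $\wn$, reproduces via the continuity equation the capillary energy $\kappa^2\frac{d}{dt}\int|\nabla\rn|^2$ already present in \eqref{eq:en}. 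I expect the main obstacle to be exactly this bookkeeping of the capillary contributions: in contrast to the lower order terms, the third order term does not obviously yield a sign--definite quantity until the two integrations by parts are performed and the weight $\rn$ is absorbed against the logarithmic drift, and one must verify that no uncontrolled remainder survives. Once all contributions are collected, integrating in time from $0$ to $t$ and inserting the initial data gives the identity \eqref{eq:bd1}.
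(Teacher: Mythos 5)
Your proposal follows essentially the same route as the paper: introduce the effective velocity $\wn=\un+2\nu\nabla\log\rn$, derive the equation for $\rn\wn$ using the BD identity that trades $2\nu\int\rn|D\un|^2$ for $2\nu\int\rn|A\un|^2$, test against $\wn$, and collect the pressure and capillary contributions; your explicit computation of the capillary term against the drift, yielding $4\kappa^2\nu\int|\Delta\rn|^2$, is exactly the new dissipation in \eqref{eq:bd1}. The only cosmetic difference is bookkeeping: the paper extracts the $|\Delta\rn|^2$ and $|\nabla\rn^{\gamma/2}|^2$ terms by testing the modified continuity equation $\partial_t\rn+\dive(\rn\wn)=2\nu\Delta\rn$ against $-2\kappa^2\Delta\rn$ and $\gamma\rn^{\gamma-1}/(\gamma-1)$ respectively, whereas you pair the capillary and pressure terms of the momentum balance directly with the drift part of $\wn$ --- an equivalent reorganization of the same identity.
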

\begin{proof}
We first perform the effective velocity transformation. Let $c\in \R$ to be chosen later. Let us consider $\wn=\un+c\nabla\log\rn$. Then,
\begin{equation*}
\partial_t\rn+\dive(\rn\wn)=\partial_t\rn+\dive(\rn(\un+c\nabla\log\rn))=c\Delta\rn.
\end{equation*}
We recall the following elementary identities,
\begin{align*}
c(\rn\nabla\log\rn)_t&=-c\nabla\dive(\rn \un),\\
c\dive(\rn \un\otimes\nabla\log\rn+\rn\nabla\log\rn\otimes \un)&=c\Delta(\rn \un)-2c\dive(\rn D\un)\\&+c\nabla\dive(\rn \un),\\
c^{2}\dive(\rn\nabla\log\rn\otimes\nabla\log\rn)&=c^2\Delta(\rn\nabla\log\rn)\\
&-c^2\dive(\rn\nabla^2\log\rn).
\end{align*}
By using these identities it is easy to prove that 
\begin{equation*}
\begin{aligned}
\partial_t(\rn \wn)+\dive(\rn \wn\otimes \wn)&+\nabla\rn^{\gamma}-c\Delta(\rn \wn)=2(\nu\!-\!c)\dive(\rn D\wn)\\
&-(\!c^2\!\!+\!2(\nu\!-\!c)c)\dive(\rn\nabla^2\log\rn)+2\kappa^2\rn\nabla\Delta\rn.
\end{aligned}
\end{equation*}
Then, by choosing  $c=2\nu$ we obtain the following system
\begin{align}
\partial_t\rn+\dive(\rn\wn)&=2\nu\Delta\rn, \label{eq:wqns1}\\
\partial_t(\rn \wn)+\dive(\rn \wn\otimes \wn)&+\nabla\rn^{\gamma}-2\nu\Delta(\rn \wn)\nonumber\\
&+2\nu\dive(\rn D\wn)=2\kappa^2\rn\nabla\Delta\rn\label{eq:wqns2}.
\end{align}
The BD Entropy \eqref{eq:bd1} is nothing else than the energy estimate associated with the system \eqref{eq:wqns1}-\eqref{eq:wqns2}.
By multiplying \eqref{eq:wqns2} by $w_n$, by integrating in space and by using \eqref{eq:wqns1} we get 
\begin{equation}\label{eq:s4}
\frac{d}{dt}\int\rn\frac{|\wn|^2}{2}\,dx+\int\nabla\rn^{\gamma} \wn\,dx+2\nu\int\rn|A \un|^2\,dx-2\kappa^2\int\rn\nabla\Delta\rn\wn=0.
\end{equation}
Then, we multiply the \eqref{eq:wqns1} by $\gamma\rn^{\gamma-1}/(\gamma-1)$ and by integrating by parts we get 
\begin{equation}\label{eq:s5}
\frac{d}{dt}\int\frac{\rn^{\gamma}}{\gamma-1}\,dx-\int\nabla\rn^{\gamma} \wn\,dx-2\nu\gamma\int\Delta\rn\frac{\rn^{\gamma-1}}{\gamma-1}\,dx=0.
\end{equation}
Finally, by multiplying \eqref{eq:wqns1} by $-2\kappa^2\Delta\rn$ we have 
\begin{equation}\label{eq:s6}
\frac{d}{dt}\int\kappa^2|\nabla\rn|^2\,dx+4\nu\kappa^2\int|\Delta\rn|^{2}\,dx-2\kappa^2\int\dive(\rn\wn)\Delta\rn=0.
\end{equation}
By summing up \eqref{eq:s4}, \eqref{eq:s5} and \eqref{eq:s6} and integrating by parts we get \eqref{eq:bd1}.
\end{proof}
\section{Compactness}\label{sec:main}
In this Section we are going to prove the main result of our paper. 
\subsection{ Bounds independent on $n$}
First of all we collect the a priori bounds we can deduce from the Proposition \ref{lem:s1} and Proposition \ref{lem:s3bis}. By the energy estimates in Proposition \ref{lem:s1} and the assumptions \eqref{eq:hyidr}, \eqref{eq:hyidu} we have the following uniform bounds. 
\begin{equation}\label{eq:ub1} 
\begin{aligned}
\|\sqrt{\rho_n}u_n\|_{L^\infty_tL^2_x}\leq C, &\;\|\nabla\rn\|_{L^\infty_tL^2_x}\leq C,\\
\|\rho_n\|_{L^\infty_t(L^1_x\cap L^\gamma_x)}\leq C,&\; \|\sqrt{\rho_n}D u_n\|_{L^2_{t, x}}\leq C.
\end{aligned}
\end{equation}
The uniform bounds obtained by the BD Entropy, Proposition \ref{lem:s3bis}, are the following
\begin{equation}\label{eq:ub2} 
\begin{aligned}
\|\sqrt{\rho_n}w_n\|_{L^\infty_tL^2_x}\leq C,&\;\|\sqrt{\rho_n}A u_n\|_{L^2_{t, x}}\leq C,\\
\|\nabla\rho_n^{\gamma/2}\|_{L^2_{t, x}}\leq C,&\;\|\Delta\rn\|_{L^2_{t, x}}\leq C.
\end{aligned}
\end{equation}
Combining some of the bounds in \eqref{eq:ub1} and in \eqref{eq:ub2} we obtain the following bounds
\begin{equation}\label{eq:ub3} 
\begin{aligned}
\|\nabla\sqrt{\rho_n}\|_{L^\infty_tL^2_x}\leq C,&\;\|\sqrt{\rho_n}\nabla u_n\|_{L^2_{t, x}}\leq C.
\end{aligned}
\end{equation}
Of course, additional bounds can be easily obtained by interpolation and Sobolev embeddings. Here we list only the ones will be used in the sequel. By Sobolev embeddings and interpolation inequalities we get 
\begin{equation}\label{eq:ub4} 
\begin{aligned}
\|\rho_n\|_{L^2_tL^\infty_x}\leq C,\quad &
\|\nabla\rho_n\|_{L^\frac{10}{3}_{t,x}}\leq C, \quad \|\rn^{\frac{\gamma}{2}}\|_{L^{\frac{10}{3}}_{t,x}}\leq C. 
\end{aligned}
\end{equation}
By using \eqref{eq:ub1}, \eqref{eq:ub2}, \eqref{eq:ub4} and H\"older inequality we have 
\begin{equation}\label{eq:ub5}
\begin{aligned}
&\|\rn\un\|_{L^{2}_{t,x}}\leq C,\qquad&\|\nabla(\rn\un)\|_{L^{2}_{t}(L^{1}_{x})}\leq C. 
\end{aligned}
\end{equation}
Finally, by using the continuity equation \eqref{eq:qns1} we have that 
\begin{equation}\label{eq:ub6}
\|\partial_t\rn\|_{L^2_tL^1_x}\leq C.
\end{equation}
\subsection{Convergence Lemma}
By using the above uniform bounds we are now able to prove the following convergences. 
\begin{lemma}\label{lem:c1}
Let $\{(\rn, \un)\}_n$ be a sequence of solutions of \eqref{eq:qns1}-\eqref{eq:qns2}. 
\begin{enumerate}
\item Up to subsequences there exist, $\rho$, $m$, $\mathcal{S}$, $\mathcal{A}$ and $\Lambda$ such that  
\begin{align}
&\rn\rightarrow\rho\textrm{ strongly in }L^{2}(0,T;H^{1}(\T)),\label{eq:strong1}\\
&\rn\un\rightarrow m\textrm{ strongly in }L^{p}(0,T;L^{p}(\T))\textrm{ with }p\in[1,2),\label{eq:strong3}\\
&\rrn\,D(\un)\weakto \mathcal{S}\textrm{ weakly in }L^{2}((0,T)\times\T),\label{eq:weakvisc}\\
&\rrn\,A(\un)\weakto \mathcal{A}\textrm{ weakly in }L^{2}((0,T)\times\T),\label{eq:weakviscant}\\
&\rrn\un\weaktos\Lambda\textrm{ weakly* in }L^{\infty}(0,T;L^{2}(\T)).\label{eq:weakrru}
\end{align}
Moreover, $\Lambda$ is such that $\rrho\Lambda=m$. 
\item The following additional convergences hold for the density
 \begin{align}
&\nabla\rrn\weakto\nabla\rrho\textrm{ weakly in }L^{2}((0,T)\times\T),\label{eq:weaknrr}\\
&\Delta\rn\weakto\Delta\rho\textrm{ weakly in }L^{2}((0,T)\times\T),\label{eq:weakdr}\\
&\rn^{\gamma}\rightarrow\rho^{\gamma}\textrm{ strongly in }L^{1}((0,T)\times\T),\label{eq:strong4}\\
&\nabla\rn^{\frac{\gamma}{2}}\weakto\nabla\rho^{\frac{\gamma}{2}}\textrm{ weakly in }L^{2}((0,T)\times\T).\label{eq:weaknp}
\end{align}
\end{enumerate}
\end{lemma}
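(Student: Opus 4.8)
The plan is to derive every strong convergence from an Aubin--Lions--Simon compactness argument combined with Vitali's theorem, and every weak convergence from the uniform bounds via Banach--Alaoglu, identifying each limit by pairing a strongly convergent factor against a weakly convergent one.

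First I would establish \eqref{eq:strong1}. From \eqref{eq:ub1}, \eqref{eq:ub2} the sequence $\{\rn\}$ is bounded in $L^2(0,T;H^2(\T))$ (combining $\|\Delta\rn\|_{L^2_{t,x}}\le C$ with the $L^\infty_t L^2_x$ control of $\rn$ and $\nabla\rn$), while \eqref{eq:ub6} bounds $\{\partial_t\rn\}$ in $L^2(0,T;L^1(\T))$. Since $H^2(\T)\hookrightarrow\hookrightarrow H^1(\T)\hookrightarrow W^{-1,1}(\T)$ with the first embedding compact, Aubin--Lions--Simon yields relative compactness of $\{\rn\}$ in $L^2(0,T;H^1(\T))$, which is \eqref{eq:strong1} up to a subsequence. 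In particular $\rn\to\rho$ a.e., and since $|\rrn-\rrho|^2\le|\rn-\rho|$, also $\rrn\to\rrho$ strongly in $L^2((0,T)\times\T)$.

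For \eqref{eq:strong3} I would again invoke Aubin--Lions: by \eqref{eq:ub5} the momenta $\{\rn\un\}$ are bounded in $L^2(0,T;W^{1,1}(\T))$, so it remains to bound $\partial_t(\rn\un)$ in some $L^1(0,T;W^{-s,q}(\T))$ by reading off each term of \eqref{eq:qns2}. The convective term $\dive(\rrn\un\otimes\rrn\un)$ sits in $L^\infty_t W^{-1,1}_x$ by \eqref{eq:ub1}; the pressure gradient $\nabla\rn^{\gamma}$ lies in $L^{5/3}_t W^{-1,5/3}_x$ since $\rn^{\gamma/2}\in L^{10/3}_{t,x}$; the viscous term $2\nu\dive(\rrn\,\rrn D\un)$ is in $L^2_t W^{-1,3/2}_x$ using $\rrn\in L^\infty_t L^6_x$ and $\rrn D\un\in L^2_{t,x}$. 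The delicate term is the third-order capillarity contribution $2\kappa^2\rn\nabla\Delta\rn$: here I would rewrite it in divergence form, $\rn\nabla\Delta\rn=\nabla(\rn\Delta\rn)-\dive(\nabla\rn\otimes\nabla\rn)+\tfrac12\nabla|\nabla\rn|^2$, and control the pieces by $\rn\Delta\rn\in L^1_t L^2_x$ (from $\rn\in L^2_t L^\infty_x$ and $\Delta\rn\in L^2_{t,x}$) and $|\nabla\rn|^2\in L^{5/3}_{t,x}$ (from $\nabla\rn\in L^{10/3}_{t,x}$). Together these give the needed bound on $\partial_t(\rn\un)$. Since $W^{1,1}(\T)\hookrightarrow\hookrightarrow L^{q_0}(\T)$ for $q_0<3/2$, Aubin--Lions gives $\rn\un\to m$ strongly in $L^2(0,T;L^{q_0}(\T))$, hence a.e.; combined with the $L^2_{t,x}$ bound of \eqref{eq:ub5}, Vitali's theorem upgrades this to strong convergence in $L^p((0,T)\times\T)$ for every $p<2$, proving \eqref{eq:strong3}.

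The weak convergences \eqref{eq:weakvisc}, \eqref{eq:weakviscant}, \eqref{eq:weakrru}, \eqref{eq:weaknrr}, \eqref{eq:weakdr} and \eqref{eq:weaknp} then follow at once from \eqref{eq:ub1}, \eqref{eq:ub2}, \eqref{eq:ub3} by Banach--Alaoglu; the limits $\mathcal S,\mathcal A,\Lambda$ are merely named, whereas for $\nabla\rrn$, $\Delta\rn$ and $\nabla\rn^{\gamma/2}$ the weak $L^2$ limit is identified with $\nabla\rrho$, $\Delta\rho$, $\nabla\rho^{\gamma/2}$ by testing in $\mathcal D'$ and using the strong convergences $\rrn\to\rrho$, $\rn\to\rho$ (for \eqref{eq:weaknp} one also needs $\rn^{\gamma/2}\to\rho^{\gamma/2}$ in $L^2_{t,x}$, which comes from a.e.\ convergence plus the uniform integrability furnished by $\rn^{\gamma/2}\in L^{10/3}_{t,x}$; the same Vitali argument gives \eqref{eq:strong4}). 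Finally, to identify $m=\rrho\Lambda$ I would write $\rn\un=\rrn\cdot(\rrn\un)$ as the product of $\rrn\to\rrho$ strongly in $L^2_{t,x}$ with $\rrn\un\weaktos\Lambda$ in $L^\infty_t L^2_x$, so that the product converges to $\rrho\Lambda$ in $\mathcal D'$; comparing with the strong limit $m$ of \eqref{eq:strong3} forces $m=\rrho\Lambda$. I expect the time-derivative estimate for $\rn\un$, and in particular the reduction of the third-order capillarity term to a controllable divergence form, to be the main obstacle; once that bound is secured the remaining steps are routine. I emphasize that, unlike the quantum case, this argument only identifies the product $\rrho\Lambda=m$ and does \emph{not} establish $\Lambda=\rrho u$, consistent with the remarks following Theorem \ref{teo:main}.
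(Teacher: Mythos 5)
Your proposal is correct and follows essentially the same route as the paper: Aubin--Lions for the strong convergences \eqref{eq:strong1} and \eqref{eq:strong3} (the paper bounds $\partial_t\rn$ in $L^2_tH^{-1}_x$ and $\partial_t(\rn\un)$ in $L^2_tW^{-2,3/2}_x$, but your exponent bookkeeping is equally admissible), standard weak compactness for the remaining limits, and identification of $m=\rrho\Lambda$ by pairing the strong convergence of $\rrn$ with the weak-$*$ convergence of $\rrn\un$. Your explicit divergence-form rewriting of the capillarity term is more detailed than the paper's one-line claim, but it is the same underlying argument.
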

\begin{proof}
By using \eqref{eq:qns1} and \eqref{eq:ub5}, we have that
\begin{equation*}
\{\partial_t\rn\}_n\textrm{ is uniformly bounded in }L^{2}(0,T;H^{-1}(\T)).
\end{equation*} 
Then, since $\{\rn\}_n$ is uniformly bounded in $L^{2}(0,T;H^{2}(\T))$, by using Aubin-Lions Lemma we get \eqref{eq:strong1}. Next, by using the momentum equations and the bounds \eqref{eq:ub1}-\eqref{eq:ub5}, it is easy to prove that 
\begin{equation*}
\{\partial_t(\rn\un)\}_n\textrm{ is uniformly bounded in }L^{2}(0,T;W^{-2,\frac{3}{2}}(\T)).
\end{equation*}
Then, by using  Aubin-Lions Lemma, \eqref{eq:strong3} follows. The convergences \eqref{eq:weakvisc}, \eqref{eq:weakviscant} and \eqref{eq:weakrru} follow by standard weak compactness theorems and the equality $\rrho\Lambda=m$ follows easily from \eqref{eq:strong1} and \eqref{eq:weakrru}. Next, the convergences \eqref{eq:weaknrr}, \eqref{eq:weakdr}  follow from the the uniform bounds \eqref{eq:ub1}-\eqref{eq:ub3} and standard weak compactness arguments. Finally, The convergence \eqref{eq:strong4} is easily obtained by using \eqref{eq:strong1} and the bound \eqref{eq:ub2}, the convergence \eqref{eq:weaknp} follows by \eqref{eq:ub2} and \eqref{eq:strong1}.
\end{proof}
\begin{lemma}\label{lem:c2}
Let $f\in C\cap L^{\infty}(\R^{3};\R)$ and $(\rn, \un)$ be a solution of \eqref{eq:qns1}-\eqref{eq:qns2} and let $u$ be defined as follows: 
\begin{equation}\label{eq:defu}
u=
\left\{
\begin{array}{rll}
&\frac{m(t,x)}{\rho(t,x)}=\frac{\Lambda(t,x)}{\sqrt{\rho(t,x)}} & (t,x)\in \{\rho>0\}, \\
&0 & (t,x)\in\{\rho=0\}.
\end{array}
\right.
\end{equation}
Then, the following convergences hold. 
\begin{align}
\rn\,f(u_{n})\to\rho\,f(u)&\textrm{ strongly in }L^{p}((0,T)\times\T)\textrm{ for any }p<6,\label{eq:convro}\\
\nabla\rn\,f(u_{n})\to \nabla\rho\,f(u)&\textrm{ strongly in }L^{p}((0,T)\times\T)\textrm{ for any }p<\frac{10}{3},
\label{eq:convnro}\\
\rn\un\,f(\un)\to \rho u\,f(u)&\textrm{ strongly in }L^{p}((0,T)\times\T)\textrm{ for any }p<2,\label{eq:convm}\\
\rn^{\frac{\gamma}{2}}\,f(\un)\to \rho^{\frac{\gamma}{2}}\,f(u)&\textrm{ strongly in }L^{p}((0,T)\times\T)\textrm{ for any }p<\frac{10}{3}.\label{eq:convp}
\end{align}
\end{lemma}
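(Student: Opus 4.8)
The plan is to prove the four convergences in Lemma \ref{lem:c2} by a common strategy: first establish pointwise (a.e.) convergence of the relevant quantities, then upgrade to strong $L^p$ convergence using uniform $L^q$ bounds with $q>p$ together with Vitali's (or equivalently the generalized dominated) convergence theorem. The key structural point is that $f$ is bounded and continuous, so $f(\un)$ is automatically bounded in $L^\infty$; the integrability is entirely carried by the density factors $\rn$, $\nabla\rn$, $\rrn\un$, $\rn^{\gamma/2}$, for which we already have the uniform bounds \eqref{eq:ub1}--\eqref{eq:ub4} and the strong/weak convergences from Lemma \ref{lem:c1}.

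First I would extract a.e. convergence of $\un$ to $u$ on the set $\{\rho>0\}$. From \eqref{eq:strong1} we have $\rn\to\rho$ strongly in $L^2_tH^1_x$, hence (up to subsequence) $\rn\to\rho$ a.e., and from \eqref{eq:strong3} we have $\rn\un\to m$ a.e. On $\{\rho>0\}$ this forces $\un=\rn\un/\rn\to m/\rho=u$ a.e., and by continuity of $f$ we get $f(\un)\to f(u)$ a.e. there. On the vacuum set $\{\rho=0\}$ we cannot control $\un$, but each target quantity carries a density factor that vanishes there: $\rho f(u)$, $\nabla\rho f(u)$ (since $\nabla\rho=2\rrho\nabla\rrho=0$ a.e. on $\{\rho=0\}$), $\rho u f(u)=\rrho\Lambda f(u)$ and $\rho^{\gamma/2}f(u)$ all vanish on $\{\rho=0\}$, and the corresponding approximating quantities are controlled by $\rn$, $\nabla\rn$, etc., which converge to these vanishing limits. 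So the a.e. identification of the limit holds on all of $(0,T)\times\T$.

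Next I would run the Vitali argument for each line. For \eqref{eq:convro}: $\rn f(\un)\to\rho f(u)$ a.e. and $|\rn f(\un)|\le \|f\|_\infty \rn$ with $\rn$ bounded in, say, $L^\infty_t(L^\gamma_x)$ and more usefully in $L^2_tL^\infty_x$ by \eqref{eq:ub4}, so the sequence is uniformly integrable in any $L^p$ with $p<6$ (using the interpolated higher integrability of $\rn$), giving strong $L^p$ convergence. For \eqref{eq:convnro} I would use $|\nabla\rn f(\un)|\le\|f\|_\infty|\nabla\rn|$ with $\nabla\rn$ bounded in $L^{10/3}_{t,x}$ by \eqref{eq:ub4}, yielding uniform integrability and hence strong convergence for every $p<10/3$. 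For \eqref{eq:convp} the same scheme applies with $\rn^{\gamma/2}$ bounded in $L^{10/3}_{t,x}$. For \eqref{eq:convm} I would write $\rn\un f(\un)=\rrn(\rrn\un)f(\un)$ and use $\rrn\un$ bounded in $L^\infty_tL^2_x$ together with $\rrn$ bounded appropriately so that $\rn\un f(\un)$ is bounded in $L^2_{t,x}$; a.e. convergence plus this $L^2$ bound gives uniform integrability in $L^p$ for every $p<2$.

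The main obstacle is the careful treatment of the vacuum set, namely justifying that the limits are correctly identified as $\rho f(u)$, etc., rather than merely as some weak limit, despite $\un$ being uncontrolled where $\rho=0$. The delicate point is that strong convergence of $\rn\un$ in \eqref{eq:strong3} gives $m$ but not a.e. convergence of $\un$ itself on $\{\rho=0\}$; I would handle this by always keeping the density weight attached, estimating $\rn\un f(\un)$ directly (never $f(\un)$ alone) and invoking that $\rn\to 0$ a.e. on the vacuum region forces the product to its correct vanishing limit there. A secondary technical care is to verify uniform integrability rigorously rather than appealing to a single dominating function, since the bounds are in mixed space-time norms; this is precisely where Vitali's theorem, combined with the strict inequality $p<q$ in the exponents, does the work.
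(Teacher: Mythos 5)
Your proposal is correct and follows essentially the same route as the paper: a.e. convergence of $\rn$, $\nabla\rn$, $\rn\un$ from Lemma \ref{lem:c1} identifies the limits on $\{\rho>0\}$, the bound $|f|\le\|f\|_\infty$ together with the vanishing of the density weights handles the vacuum set, and the uniform bounds \eqref{eq:ub1}--\eqref{eq:ub5} in higher Lebesgue exponents upgrade a.e. convergence to strong $L^p$ convergence (the paper states this directly rather than naming Vitali, and additionally records the Fatou argument giving $m=0$ on $\{\rho=0\}$ and $\rrho\,u\in L^\infty_t L^2_x$, which you use implicitly via $m=\rrho\Lambda$).
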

\begin{proof}
We first first note that, up to a subsequence non relabelled, \eqref{eq:strong1} and \eqref{eq:strong3} imply that 
\begin{equation}\label{eq:convrom}
\begin{aligned}
&\rn\to\rho\textrm{ a.e. in }(0,T)\times\T,\\
&\rn\un\to m\textrm{ a.e. in }(0,T)\times\T,\\
&\nabla\rn\to\nabla\rho\textrm{ a.e. in }(0,T)\times\T.
\end{aligned}
\end{equation}
Moreover, by Fatou Lemma we have that 
\begin{equation}
\iint\liminf_{n\to\infty}\frac{m_n^2}{\rn}\,dxdt\leq \liminf_{n\to\infty}\iint\frac{m_n^2}{\rn}<\infty,
\end{equation}
which implies that $m=0$ on $\{\rho=0\}$ and 
\begin{equation*}
\rrho\,u\in L^{\infty}(0,T;L^{2}(\T)).
\end{equation*}
Moreover, $m=\rho\,u=\rrho\Lambda$. 
Let us prove \eqref{eq:convro}. On $\{\rho>0\}$ by using \eqref{eq:convrom} we have that 
\begin{equation*}
\rn\,f(u_{n})\to\rho\,f(u)\textrm{ a.e. in }\{\rho>0\}.
\end{equation*}
On the other hand, since $f\in L^{\infty}(\R^{3};\R)$ we have 
$$
|\rn\,f(u_{n})|\leq |\rn|\|f\|_{\infty}\to 0\textrm{ a.e. in }\{\rho=0\}. $$
Then, $\rn\,f(u_{n})\to\rho\,f(u)$ a.e. in $(0,T)\times\T$ and the convergence in \eqref{eq:convro} follows by the uniform bound 
\begin{equation*}
\|\rn\|_{L^{6}_{t,x}}\leq C
\end{equation*}
and Vitali's Theorem. 
Regarding \eqref{eq:convnro}, from Lemma \ref{lem:c1} we have that $\rho$ is a Sobolev function, then, see \cite{EG}, 
 
\begin{equation*}
\nabla\rho=0\textrm{ a.e. in }\{\rho=0\}. 
\end{equation*}
From \eqref{eq:convrom} we have that 
\begin{equation*}
\begin{aligned}
&\nabla\rn\,f(u_{n})\to\nabla\rho\,f(u)\textrm{ a.e. in }\{\rho>0\}\\
&|\nabla\rn\,f(u_{n})|\leq |\nabla\rn|\|f\|_{\infty}\to 0\textrm{ a.e. in }\{\rho=0\}.
\end{aligned}
\end{equation*}
Then, $\nabla\rn\,f(u_{n})\to\nabla\rho\,f(u)$ a.e. in $(0,T)\times\T$ and \eqref{eq:convnro} follows from the uniform bound \eqref{eq:ub4} and Vitali's Theorem. Concerning \eqref{eq:convm}, again \eqref{eq:convrom} implies the following convergences
\begin{equation*}
\begin{aligned}
&\rn\un\,f(u_{n})\to m\,f(u)\textrm{ a.e. in }\{\rho>0\},\\
&|\rn\un\,f(u_{n})|\leq |\rn\un|\|f\|_{\infty}\to 0\textrm{ a.e. in }\{\rho=0\},
\end{aligned}
\end{equation*}
which, together with \eqref{eq:ub4} and Vitali's Theorem, imply \eqref{eq:convm}. Finally, \eqref{eq:convp} follows by the same arguments used to prove \eqref{eq:convro} and the uniform bounds on the pressure in \eqref{eq:ub1} and \eqref{eq:ub2}.  
\end{proof}
\subsection{The Truncations}
Let $\bar{\beta}:\R\to\R$ be an even positive compactly supported smooth function such that 
\begin{equation*}
\bar{\beta}(z)=1\textrm{ for }z\in[-1,1],
\end{equation*}
$\mbox{supp}\,\bar{\beta}\subset (-2,2)$ and $0\leq \bar{\beta}\leq 1$. Given $\bar{\beta}$, we define 
$\tilde{\beta}:\R\to\R$ as follows: 
\begin{equation*}
\tilde{\beta}(z)=\int_{0}^{z}\bar{\beta}(s)\,ds. 
\end{equation*}
For $y\in\R^{3}$ we define for any $\delta>0$ the functions 
\begin{equation*}
\begin{aligned}
&\beta_{\delta}^{1}(y):=\frac{1}{\delta}\tilde{\beta}(\delta\, y_1)\bar{\beta}(\delta\, y_2)\bar{\beta}(\delta\, y_3),\\
&\beta_{\delta}^{2}(y):=\frac{1}{\delta}\bar{\beta}(\delta\, y_1)\tilde{\beta}(\delta\, y_2)\bar{\beta}(\delta\, y_3),\\
&\beta_{\delta}^{2}(y):=\frac{1}{\delta}\bar{\beta}(\delta\, y_1)\bar{\beta}(\delta\, y_2)\tilde{\beta}(\delta\, y_3).
\end{aligned}
\end{equation*}
Note that for fixed $l=1,2,3$ the function $\beta_{\delta}^{l}:\R^{3}\to\R$ is a truncation of the function $f(y)=y_l$. 
Finally, for any $\delta>0$  we define $\hat{\beta}_{\delta}:\R^{3}\to\R$ as
\begin{equation*}
\hat{\beta}_{\delta}(y):=\bar{\beta}(\delta\,y_1)\bar{\beta}(\delta\,y_2)\bar{\beta}(\delta\,y_3),
\end{equation*}
and for any $\la>0$ we define $\bar{\beta}_{\lambda}:\R\to\R$ as
$$\bar{\beta}_{\lambda}(s)=\bar{\beta}(\lambda\,s).$$
In the next Lemma we collect some of the main properties of $\beta_{\delta}^{l}$, $\hat{\beta}_{\delta}$ and $\bar{\beta}_{\lambda}$. Those properties are elementary and can be deduced directly from the definitions.
\begin{lemma}\label{lem:trunc}
Let $\lambda,\,\delta>0$ and $K:=\|\bar{\beta}\|_{W^{2,\infty}}$. Then, there exists $C=C(K)$ such that the following bounds hold. 
\begin{enumerate}
\item For any $\delta>0$ and $l=1,2,3$
\begin{equation}\label{eq:bed}
\begin{aligned}
&\|\beta^{l}_{\delta}\|_{L^{\infty}}\leq \frac{C}{\delta},\quad&\|\nabla\beta^{l}_{\delta}\|_{L^{\infty}}\leq{C},\quad
&\|\nabla^{2}\beta^{l}_{\delta}\|_{L^{\infty}}\leq C\,\delta,
\end{aligned}
\end{equation}
\item For any $\la>0$
\begin{equation}\label{eq:bel}
\begin{aligned}
&\|\bar{\beta}_{\la}\|_{L^{\infty}}\leq 1,\quad&\|\bar{\beta}'_{\la}\|_{L^{\infty}}\leq C\,\la,\quad&\sqrt{|s|}\bar{\beta}_{\la}(s)\leq \frac{C}{\sqrt{\la}}.
\end{aligned}
\end{equation}
\item For any $\delta>0$
\begin{equation}\label{eq:ybed}
\begin{aligned}
&&\|\hat{\beta}_{\delta}\|_{L^{\infty}}\leq 1,\quad&\|\nabla\hat{\beta}_{\delta}\|_{L^{\infty}}\leq{C\delta},\quad&|y||\hat{\beta}_{\delta}(y)|\leq \frac{C}{\delta},
\end{aligned}
\end{equation}
\item The following convergences hold for $l=1,2,3$, pointwise on $\R^{3}$, as $\delta\to 0$
\begin{equation}\label{eq:bedc}
\begin{aligned}
&\beta_{\delta}^{l}(y)\to y_{l},\quad
&(\nabla_{y}\beta_{\delta}^{l})(y)\to \nabla_{y_{l}} y,\quad
&\hat{\beta}_{\delta}(y)\to1.
\end{aligned}
\end{equation}
\item The following convergence holds pointwise on $\R$ as $\la\to 0$
\begin{equation}\label{eq:belc}
\begin{aligned}
&\bar{\beta}_{\la}(s)\to 1.
\end{aligned}
\end{equation}
\end{enumerate}
\end{lemma}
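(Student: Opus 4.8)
The plan is to verify each estimate directly from the definitions, exploiting the single scaling principle that governs all three families of truncations: because the argument of every factor is linearly rescaled (by $\delta$ in $\beta_\delta^l$ and $\hat\beta_\delta$, by $\la$ in $\bar\beta_\la$), each spatial derivative produces exactly one factor of the scaling parameter through the chain rule. First I would record the elementary bounds on the one-dimensional building blocks: since $0\le\bar\beta\le1$ with $\mathrm{supp}\,\bar\beta\subset(-2,2)$ and $\|\bar\beta\|_{W^{2,\infty}}=K$, one has $\|\bar\beta\|_\infty\le1$ and $\|\bar\beta'\|_\infty,\|\bar\beta''\|_\infty\le K$, while the primitive $\tilde\beta(z)=\int_0^z\bar\beta$ is globally bounded, $\|\tilde\beta\|_\infty\le\int_{-2}^{2}\bar\beta\le4$, with $\tilde\beta'=\bar\beta$. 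All constants $C$ below then depend only on $K$.

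For part (1) I would differentiate $\beta_\delta^1(y)=\frac1\delta\tilde\beta(\delta y_1)\bar\beta(\delta y_2)\bar\beta(\delta y_3)$ and track the cancellation between the prefactor $1/\delta$ and the chain-rule factors of $\delta$: zero, one, or two derivatives leave orders $\delta^{-1},\delta^{0},\delta^{1}$ respectively. Concretely, $\|\beta_\delta^1\|_\infty\le\frac1\delta\|\tilde\beta\|_\infty\le C/\delta$; one has $\partial_{y_1}\beta_\delta^1=\bar\beta(\delta y_1)\bar\beta(\delta y_2)\bar\beta(\delta y_3)$ and every off-diagonal first derivative carries one factor $\tilde\beta$ together with one $\bar\beta'$, so $\|\nabla\beta_\delta^1\|_\infty\le C$; and each second derivative retains precisely one surviving $\delta$, giving $\|\nabla^2\beta_\delta^1\|_\infty\le C\delta$. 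By the symmetry of the construction the same computation applies verbatim to $l=2,3$.

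Parts (2) and (3) are entirely analogous for $\bar\beta_\la(s)=\bar\beta(\la s)$ and $\hat\beta_\delta(y)=\bar\beta(\delta y_1)\bar\beta(\delta y_2)\bar\beta(\delta y_3)$: the bounds $\|\bar\beta_\la\|_\infty\le1$, $\|\bar\beta_\la'\|_\infty=\la\|\bar\beta'(\la\,\cdot)\|_\infty\le C\la$, $\|\hat\beta_\delta\|_\infty\le1$ and $\|\nabla\hat\beta_\delta\|_\infty\le C\delta$ all follow from the same differentiation rule. The two weighted estimates are instead \emph{support} estimates rather than derivative estimates: $\bar\beta_\la(s)$ vanishes unless $|s|\le2/\la$, whence $\sqrt{|s|}\,\bar\beta_\la(s)\le\sqrt{2}/\sqrt\la$ there, and $\hat\beta_\delta(y)$ vanishes unless $|y_i|\le2/\delta$ for every $i$, so $|y|\le2\sqrt3/\delta$ on the support, yielding $|y|\,|\hat\beta_\delta(y)|\le C/\delta$.

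Finally, for the pointwise convergences in (4)--(5) I would fix $y$ (resp.\ $s$) and let $\delta\to0$ (resp.\ $\la\to0$). The only limit that is not an immediate continuity statement is $\frac1\delta\tilde\beta(\delta y_1)\to y_1$, which I obtain from the first-order Taylor expansion $\tilde\beta(\delta y_1)=\tilde\beta(0)+\tilde\beta'(0)\delta y_1+o(\delta)=\delta y_1+o(\delta)$, using $\tilde\beta(0)=0$ and $\tilde\beta'(0)=\bar\beta(0)=1$; the remaining factors satisfy $\bar\beta(\delta y_j)\to\bar\beta(0)=1$, so $\beta_\delta^l(y)\to y_l$ and $\hat\beta_\delta(y)\to1$. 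For the gradient, the diagonal derivative converges, $\partial_{y_l}\beta_\delta^l\to1$, while every off-diagonal derivative carries a factor $\bar\beta'(\delta y_j)\to\bar\beta'(0)=0$ (recall $\bar\beta\equiv1$ near the origin), so $\nabla_y\beta_\delta^l\to\nabla_{y_l}y$; and $\bar\beta_\la(s)=\bar\beta(\la s)\to\bar\beta(0)=1$. I do not expect any genuine obstacle here, since every assertion is a direct consequence of the definitions; the only points requiring care are the precise bookkeeping of the powers of $\delta$ in the derivative bounds and the observation that $\bar\beta'(0)=0$, which is exactly what annihilates the spurious cross terms in the gradient limit.
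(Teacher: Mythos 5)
Your verification is correct, and it is exactly the argument the paper has in mind: the paper gives no proof of this lemma, stating only that the properties "are elementary and can be deduced directly from the definitions," which is precisely the direct chain-rule/support bookkeeping you carry out. The only minor remark is that for fixed $y$ the convergences in (4)--(5) are eventually exact equalities (since $\bar{\beta}\equiv 1$ near the origin, $\tilde{\beta}(z)=z$ for $|z|\le 1$), so the Taylor expansion, while correct, is not even needed.
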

\subsection{Proof of the main Theorem}
We are now ready to prove Theorem \ref{teo:main}. 
\begin{proof}[Proof of Theorem \ref{teo:main}]
Let $(\rn,\un)$ be a solution of \eqref{eq:qns1}-\eqref{eq:qns2}. By Lemma \ref{lem:c1} there exist $\rho$, $m$, $\Lambda$ such that the convergences \eqref{eq:strong1}, \eqref{eq:strong3} and \eqref{eq:weakrru} hold. Moreover, 
by defining the velocity $u$ as in Lemma \ref{lem:c2} we have that 
\begin{equation*}
\begin{aligned}
&\rrho\,u\in L^{\infty}(0,T;L^{2}(\T),\\
&m=\rrho\Lambda=\rho\,u. 
\end{aligned}
\end{equation*}
By using \eqref{eq:strong1}, \eqref{eq:strong3} and \eqref{eq:hyidr} is straightforward to prove that 
\begin{equation*}
\int\rn^0\phi(0,x)+\iint\rn\phi_t\,dxdt+\iint\rn\un\nabla\phi\,dxdt
\end{equation*}
converges to 
\begin{equation*}
\int\rho^0\phi(0,x)+\iint\rho\phi_t\,dxdt+\iint\rho\,u\nabla\phi\,dxdt,
\end{equation*}
for any $\phi\in C^{\infty}_{c}([0,T)\times\T)$. 
Let us consider the momentum equations.  Let $l\in\{1,2,3\}$ fixed. By multiplying \eqref{eq:qns2} by $\nablay\bedn$ and by using the continuity equation \eqref{eq:qns1} we have that 
\begin{equation}\label{eq:ren1}
\begin{aligned}
&\partial_t(\rn\bedn)+\dive(\rn\un\bedn)-2\nu\dive(\rn\,D(\un))\nablay\bedn\\
&+\nabla\rn^{\gamma}\nabla_{y}\bedn-2\kappa^2\rn\nabla\Delta\rn\nablay\bedn=0.
\end{aligned}
\end{equation}
Let $\psi\in C^{\infty}_{c}([0,T)\times\T;\R)$, by multiplying \eqref{eq:ren1} by $\beln\psi$ and integrating by parts we 
get 
\begin{equation}\label{eq:ren2}
\begin{aligned}
&\int\rn^{0}\beta_{\delta}^{l}(\un^0)\bar{\beta}_{\la}(\rn^0)\psi(0,x)\,dx+\iint\rn\bedn\beln\partial_t\psi+\iint\rn\un\bedn\beln\cdot\nabla\psi\,dxdt\\
&+2\nu\iint\rrn\,D\un:\rrn\nablay\bedn\beln\otimes\nabla\psi\,dxdt-2\iint\rn^{\frac{\gamma}{2}}\nabla\rn^{\frac{\gamma}{2}}\cdot\nablay\bedn\beln\psi\,dxdt\\
&-2\kappa^2\iint\nabla\rn\Delta\rn\nablay\bedn\beln\psi\,dxdt-2\kappa^2\iint\rn\Delta\rn\nablay\bedn\beln\nabla\psi\,dxdt\\
&+\iint R^{\delta,\la}_{n}\psi\,dxdt=0.
\end{aligned}
\end{equation}
where the remainder is
\begin{equation}\label{eq:remainder}
\begin{aligned}
R^{\delta,\la}_{n}=\sum_{i=1}^{6}R^{\delta,\la}_{n,i}&=\rn\bedn\belnp\partial_t\rn\\
&+\rn\un\bedn\belnp\nabla\rn\\
\\
&-2\nu\rrn\,D\un:\rrn\nablay\bedn\otimes\nabla\rn\belnp\\
\\
&+2\kappa^2\rn\Delta\rn\nabla^{2}_{y}\bedn:\nabla\un\beln\\
\\
&+2\kappa^2\rn\Delta\rn\nablay\bedn\belnp\nabla\rn\\
\\
&-2\nu\rn\,D\un\nablay^2\bedn\nabla\un\beln.\\
\\
\end{aligned}
\end{equation}
We first perform the limit as $n$ goes to $\infty$ for $\delta$ and $\la$ fixed. Notice that, since $\bar{\beta}_{\lambda}\in L^{\infty}(\R)$, and $\{\rn\}_{n}$ converges almost everywhere, we have that
\begin{equation}\label{eq:convrot}
\beln\to\bar{\beta}_{\lambda}(\rho)\textrm{ strongly in }L^{q}((0,T)\times\T)\textrm{ for any }q<\infty.
\end{equation}
By using \eqref{eq:convro} with $p=2$ and choosing $q=2$ in \eqref{eq:convrot} we have that 
\begin{equation*}
\iint\rn\bedn\beln\partial_t\psi\,dxdt\to\iint\rho\bed\bel\partial_t\psi\,dxdt.
\end{equation*}
Next, by \eqref{eq:convm} with $p=3/2$ and choosing $q=3$ in \eqref{eq:convrot} we get
\begin{equation*}
\iint\rn\un\bedn\beln\cdot\nabla\psi\,dxdt\to \iint\rho\,u\bed\bel\cdot\nabla\psi\,dxdt.
\end{equation*}
By using \eqref{eq:weakvisc}, \eqref{eq:convro} with $p=4$ and \eqref{eq:convrot} with $q=4$ it follows
\begin{equation*}
\iint\rrn\,D\un:\rrn\nablay\bedn\beln\otimes\nabla\psi\,dxdt\to\iint\sqrt{\rho}\,\mathcal{S}:\nablay\bed\bel\otimes\nabla\psi\,dxdt.
\end{equation*}
By using \eqref{eq:weaknp}, \eqref{eq:convp} with $p=3$ and \eqref{eq:convrot} with $q=6$ it follows
\begin{equation*}
\iint\rn^{\frac{\gamma}{2}}\nabla\rn^{\frac{\gamma}{2}}\cdot\nablay\bedn\beln\psi\,dxdt\to
\iint\rho^{\frac{\gamma}{2}}\nabla\rho^{\frac{\gamma}{2}}\cdot\nablay\bed\bel\psi\,dxdt.
\end{equation*}
By using \eqref{eq:weakdr}, \eqref{eq:convnro} with $p=3$ and \eqref{eq:convrot} with $q=6$ it follows
\begin{equation*}
\iint\nabla\rn\Delta\rn\nablay\bedn\beln\psi\,dxdt\to\iint\nabla\rho\Delta\rho\nablay\bed\bel\psi\,dxdt.
\end{equation*}
Next, by using \eqref{eq:weakdr}, \eqref{eq:convro} with $p=3$ and \eqref{eq:convrot} with $q=6$ it follows
\begin{equation*}
\iint\rn\Delta\rn\nablay\bedn\beln\nabla\psi\,dxdt\to\iint\rho\Delta\rho\nablay\bed\bel\nabla\psi\,dxdt.
\end{equation*}
Finally, by using \eqref{eq:hyidr} the convergence of the term involving the initial data can be easily proved. It remains to study the remainder $R^{\delta,\la}_{n}$. We claim that there exists a $C>0$ independent on $n$, $\delta$ and $\la$ such that 
\begin{align}
&\|R^{\delta,\la}_{n}\|_{L^{1}_{t,x}}\leq C\left(\frac{\delta}{\sqrt{\la}}+\frac{\la}{\delta}+\la+\delta\right).\label{eq:r1}
\end{align}
In order to prove \eqref{eq:r1} we estimate all the terms in \eqref{eq:remainder}  separately. By using \eqref{eq:ub4}, \eqref{eq:ub6}, \eqref{eq:bed} and \eqref{eq:bel} we have
\begin{equation*}
\|R^{\delta,\la}_{n,1}\|_{L^{1}_{t,x}}\leq \|\rn\|_{L^{2}_tL^{\infty}_x}\|\partial_{t}\rn\|_{L^{2}_tL^{1}_x}\|\bedn\|_{L^{\infty}_{t,x}}
\|\belnp\|_{L^{\infty}_{t,x}}\leq C\frac{\lambda}{\delta}.
\end{equation*}
By using \eqref{eq:ub1}, \eqref{eq:ub4}, \eqref{eq:bed} and \eqref{eq:bel} it holds
\begin{equation*}
\|R^{\delta,\la}_{n,2}\|_{L^{1}_{t,x}}\leq \|\rn\un\|_{L^{2}_{t,x}}\|\nabla\rn\|_{L^{2}_{t,x}}\|\bedn\|_{L^{\infty}_{t,x}}
\|\belnp\|_{L^{\infty}_{t,x}}\leq C\frac{\lambda}{\delta}.
\end{equation*}
By using \eqref{eq:ub1}, \eqref{eq:ub4}, \eqref{eq:bed} and \eqref{eq:bel} we get 
\begin{equation*}
\|R^{\delta,\la}_{n,3}\|_{L^{1}_{t,x}}\leq \|\sqrt{\rn}\|_{L^{2}(L^{\infty})}\|\rrn\,D\un\|_{L^{2}_{t,x}}\|\nabla\rn\|_{L^{\infty}_tL^{2}_x}
\|\nablay\bedn\|_{L^{\infty}_{t,x}}\|\belnp\|_{L^{\infty}_{t,x}}\leq C\la.
\end{equation*}
By using \eqref{eq:ub1}, \eqref{eq:ub2}, \eqref{eq:bed} and \eqref{eq:bel} we have that 
\begin{equation*}
\|R^{\delta,\la}_{n,4}\|_{L^{1}_{t,x}}\leq \|\Delta\rn\|_{L^{2}_{t,x}}\|\rrn\,D\un\|_{L^2_{t,x}}\|\nablay^2\bedn\|_{L^{\infty}_{t,x}}\|\rrn\beln\|_{L^{\infty}_{t,x}}\leq C\frac{\delta}{\sqrt{\la}}.
\end{equation*}
By using 
\begin{equation*}
\|R^{\delta,\la}_{n,5}\|_{L^{1}_{t,x}}\leq \|\rn\|_{L^{2}(L^{\infty})}\|\Delta\rn\|_{L^{2}_{t,x}}\|\nabla\rn\|_{L^{\infty}(L^{2})}
\|\nablay\bedn\|_{L^{\infty}_{t,x}}\|\belnp\|_{L^{\infty}_{t,x}}\leq C\la.
\end{equation*}
Finally, by using \eqref{eq:ub1}, \eqref{eq:bed} and \eqref{eq:bel} we have
\begin{equation*}
\|R^{\delta,\la}_{n,6}\|_{L^{1}_{t,x}}\leq \|\sqrt{\rn}\nabla\un\|_{L^{2}_{t,x}}^2\|\nablay^2\bedn\|_{L^{\infty}_{t,x}}\|\beln\|_{L^{\infty}_{t.x}}\leq C\delta.
\end{equation*}
Then, \eqref{eq:r1} is proved and, when $n$ goes to infinity, we have that $(\rho, u)$ satisfies the following integral equality
\begin{equation}\label{eq:ren3}
\begin{aligned}
&\iint\rho\bed\bel\partial_t\psi+\iint\rho\,u\bed\bel\cdot\nabla\psi\,dxdt\\
&-2\nu\iint\sqrt{\rho}\mathcal{S}:\nablay\bed\bel\otimes\nabla\psi\,dxdt-\iint\rho^{\frac{\gamma}{2}}\nabla\rho^{\frac{\gamma}{2}}\cdot\nablay\bed\bel\psi\,dxdt\\
&-2\kappa^2\iint\nabla\rho\Delta\rho\nablay\bed\bel\psi\,dxdt-2\kappa^2\iint\rho\Delta\rho\nablay\bed\bel\nabla\psi\,dxdt\\
&-\int\rho^{0}\beta_{\delta}^{l}(u^0)\bar{\beta}_{\la}(\rho^0)\psi(0,x)\,dx+\langle\mu^{\delta,\la},\psi\rangle=0,
\end{aligned}
\end{equation}
where  $\mu^{\delta,\la}$ is a measure such that 
\begin{equation*}
\begin{aligned}
&R^{\delta,\la}_{n}\to\mu^{\delta,\la}\textrm{ in }\mathcal{M}(\T; \R)
\end{aligned}
\end{equation*}
and its total variations satisfies
\begin{equation}\label{eq:totalvar}
\begin{aligned}
&|\mu^{\delta,\la}|(\T)\leq C\left(\frac{\delta}{\sqrt{\la}}+\frac{\la}{\delta}+\la+\delta\right).
\end{aligned}
\end{equation}
Let $\delta=\la^{\alpha}$ with $\alpha\in (1/2,1)$, then when $\la\to0$ we have that 
\begin{equation*}
\begin{aligned}
&|\mu^{\la^{\alpha},\la}|(\T)\to 0
\end{aligned}
\end{equation*}
and by \eqref{eq:bedc}, \eqref{eq:belc} and the Lebesgue Dominated Convergence Theorem we have that \eqref{eq:ren3} converge to
 \begin{equation}\label{eq:ren4}
\begin{aligned}
&\int\rho^{0}\,u^{l,0}\psi(0,x)\,dx+\iint\rho\,u^{l}\partial_t\psi+\iint\rho\,u\,u^{l}\cdot\nabla\psi\,dxdt
-2\nu\iint\sqrt{\rho}\mathcal{S}_{lj}\nabla_{j}\psi\,dxdt\\&-\iint\rho^{\frac{\gamma}{2}}\nabla_{l}\rho^{\frac{\gamma}{2}}\psi\,dxdt
-2\kappa^2\iint\nabla_{l}\rho\Delta\rho\psi\,dxdt-2\kappa^2\iint\rho\Delta\rho\nabla_{l}\psi\,dxdt=0.
\end{aligned}
\end{equation}
Next we need to identify the tensor $\mathcal{S}$. Let $\phi\in C^{\infty}_{c}([0,T)\times\T;\R)$ and $l=1,2,3$ fixed. Then the following equality holds
\begin{equation*}
\begin{aligned}
2\iint\hat{\beta}_{\delta}(\un)\rn\,(D(\un))_{l,j}\nabla_{j}\phi\,dxdt&=\iint (\nabla(\rn \un^{l})\hat{\beta}_{\delta}(\un)\nabla\phi\,dxdt\\
&+\iint (\nabla_{l}(\rn \un))\hat{\beta}_{\delta}(\un)\nabla\phi\,dxdt\\
&-2\iint\nabla\rrn\rrn\un^{l}\hat{\beta}_{\delta}(\un)\nabla\phi\,dxdt\\
&-2\iint\nabla_l\rrn\rrn\un\hat{\beta}_{\delta}(\un)\nabla\phi\,dxdt.
\end{aligned}
\end{equation*} 
 By integrating by parts we get 
\begin{equation*}
\begin{aligned}
2\iint\rrn\hat{\beta}_{\delta}(\un)\rrn\,(D(\un))_{l,j}\nabla_{j}\phi\,dxdt=&-\iint \rn \un^{l}\hat{\beta}_{\delta}(\un)\Delta\phi\,dxdt\\
&-\iint \rn \un\hat{\beta}_{\delta}(\un)\nabla\nabla_{l}\phi\,dxdt\\
&-2\iint\nabla\rrn\rrn\un^{l}\hat{\beta}_{\delta}(\un)\nabla\phi\,dxdt\\
&-2\iint\partial_l\rrn\rrn\un\hat{\beta}_{\delta}(\un)\nabla\phi\,dxdt\\
&-\iint\bar{R}^{\delta}_{n,j}\nabla_j\phi\,dxdt,
\end{aligned}
\end{equation*}
where the remainder is
\begin{equation}\label{eq:remainder2}
\bar{R}^{\delta}_{n,j}=\rn \un^{l}\nabla_{y_k}\hat{\beta}_{\delta}(\un)\nabla_{j}\un^{k}+\rn \un^{j}\nabla_{y_k}\hat{\beta}_{\delta}(\un)\nabla_{l}\un^{k}.
\end{equation}
For fixed $\delta$, by using the convergence \eqref{eq:weakvisc} and \eqref{eq:convro} with $p=4$, we have that 
\begin{equation*}
2\iint\rrn\hat{\beta}_{\delta}(\un)\rrn\,(D(\un))_{l,j}\partial_{j}\phi\,dxdt\to2\iint\rrho\mathcal{S}_{l,j}\hat{\beta}_{\delta}(u)\partial_{j}\phi\,dxdt
\end{equation*}
Next, we have that 
\begin{equation*}
\begin{aligned}
\iint \rn \un^{l}\hat{\beta}_{\delta}(\un)\Delta\phi\,dxdt&\to\iint \rho \,u^{l}\hat{\beta}_{\delta}(u)\Delta\phi\,dxdt\\
\iint \rn \un^{j}\hat{\beta}_{\delta}(\un)\nabla^{2}_{j,l}\phi\,dxdt&\to\iint \rho\,u^{j}\hat{\beta}_{\delta}(u)\nabla^{2}_{j,l}\phi\,dxdt
\end{aligned}
\end{equation*}
because of  \eqref{eq:convm} with $p=1$. By using \eqref{eq:ybed}, \eqref{eq:convro} with $p=2$ and the weak convergence of $\nabla\rrn$ in $L^{2}_{t,x}$ we get 
\begin{equation*}
\begin{aligned}
\iint\nabla_{l}\rrn\rrn\un\hat{\beta}_{\delta}(\un)\nabla\phi\,dxdt&\to
\iint\nabla_{l}\rrho\rrho\,u\hat{\beta}_{\delta}(u)\nabla\phi\,dxdt\\
\iint\nabla\rrn\rrn\un^{l}\hat{\beta}_{\delta}(\un)\nabla\phi\,dxdt&\to
\iint\nabla\rrho\rrho\,u^{l}\hat{\beta}_{\delta}(u))\nabla\phi\,dxdt\\
\end{aligned}
\end{equation*}
Finally, by using \eqref{eq:ub1}, \eqref{eq:ub2} and \eqref{eq:ybed} we have that 
\begin{equation*}
\|\bar{R}^{\delta}_{n}\|_{L^{1}_{t,x}}\leq C\|\rrn\|_{L^{\infty}_tL^{2}_{x}}\|\rrn\,D(\un)\|_{L^{2}_{t,x}}\|\nablay\hat{\beta}_{\delta}(\un)\|_{L^{\infty}_{t,x}}\leq C\delta,
\end{equation*}
and then there exists a measure $\bar{\mu}^{\delta}$ such that 
\begin{equation}\label{eq:totvar2}
\iint\bar{R}^{\delta}_{n}\cdot\nabla\phi\,dxdt
\to\langle\bar{\mu}^{\delta},\nabla\psi\rangle,
\end{equation}
and its total variation satisfies
\begin{equation*}
|\bar{\mu}^{\delta}|(\T)\leq C\delta.
\end{equation*}
Collecting the previous convergences, we have
\begin{equation*}
\begin{aligned}
2\iint\rrho\mathcal{S}_{l,j}\hat{\beta}_{\delta}(u)\nabla_{j}\phi\,dxdt&=-\iint \rho \,u^{l}\hat{\beta}_{\delta}(u)\Delta\phi\,dxdt\\
&-\iint \rho\,u^{j}\hat{\beta}_{\delta}(u)\nabla^{2}_{j,l}\phi\,dxdt\\
&-2\iint\nabla_{l}\rrho\rrho\,u\hat{\beta}_{\delta}(u)\nabla\phi\,dxdt\\
&-2\iint\nabla\rrho\rrho\,u^{l}\hat{\beta}_{\delta}(u))\nabla\phi\,dxdt\\
&-\langle\bar{\mu}^{\delta},\nabla\psi\rangle.
\end{aligned}
\end{equation*}
Finally, by using \eqref{eq:bedc}, Dominated Convergence Theorem and \eqref{eq:totvar2} we get that 
\begin{equation*}
\begin{aligned}
2\iint\rrho\mathcal{S}_{l,j}\nabla_{j}\phi\,dxdt=&-\iint \rho \,u^{l}\Delta\phi\,dxdt
-\iint \rho\,u^{j}\nabla^{2}_{j,l}\phi\,dxdt\\
&-2\iint\nabla_{l}\rrho\rrho\,u\nabla\phi\,dxdt\\
&-2\iint\nabla\rrho\rrho\,u^{l}\nabla\phi\,dxdt.\\
\end{aligned}
\end{equation*}
By the very same arguments we identify also the tensor $\mathcal{A}$. 
Finally, the energy inequality and the BD Entropy follow from the lower semicontinuity of the norms. 
\end{proof}

\end{document}